\tikzstyle{vertex}=[circle,fill, draw, inner sep=0pt, minimum size=6pt]
\newtheorem{theorem}{\bf Theorem}[section]
\newtheorem{corollary}[theorem]{\bf Corollary}
\newtheorem{lemma}[theorem]{\bf Lemma}
\newtheorem{proposition}[theorem]{\bf Proposition}
\newtheorem{conjecture}[theorem]{\bf Conjecture}
\newcommand{\qed}{\hfill $\square$ \medskip}
\begin{document}

	\title{Finite groups with the same Power graph}
	
	\author{
		M. Mirzargar $\ ^{a}$, R. Scapellato $\ ^{b}$
	}
	
	\date{\today}
	
	\maketitle
	
	\begin{center}
		
		$^a$
		Faculty of Science, Mahallat Institute of Higher Education, Mahallat, I. R. Iran \\
		{\tt m.mirzargar@gmail.com}

		$^b$
		Dipartimento di Matematica, Politecnico di Milano, Milano, Italy\\
		{\tt raffaele.scapellato@polimi.it }

		\medskip
		
		
	\end{center}
	
	\date{}
	
	\maketitle
	
	\begin{abstract}
	The power graph  $P(G)$ of a group $G$ is a graph with vertex set $G$, where two vertices $u$ and $v$ are adjacent if and only if $u \neq v$ and $u^m=v$ or $v^m=u$ for some positive integer $m$. 
 In this paper, we raise and study the following question:
For which natural numbers $n$ every two groups of order $n$ with isomorphic power graphs are isomorphic? In particular, we determine prove that all  such $n$ are cube-free and are not multiples of 16. 
Moreover, we show that if two finite groups have isomorphic power graphs and one of them is nilpotent or has a normal Hall subgroup, the same is true with the other one.
\end{abstract}
	
	\noindent {\bf Key words}: power graph, conformal groups, nilpotent group.
	
	\bigskip\noindent
	{\bf AMS Subject Classification (2000)}: 05C12, 91A43, 05C69.
	
	\section{Introduction}

There are many different ways to associate a graph to the given group, including the commuting graphs \cite{ba}, prime graphs \cite{li}, and of course Cayley graphs, which have a long history and applications \cite{k2}. 
Graphs associated with groups and other algebraic structures have been actively investigated since they have valuable applications \cite{k5, x, x2} and specially are related to automata theory \cite{k3, k4}. 
The rigorous development of the mathematical theory of complexity via algebraic automata theory reveals deep and unexpected connections between algebra (semigroups) and areas of science and engineering.  The book  \cite{ito} sets the stage for the application of algebraic automata theory to areas outside mathematics.

Let $G$ be a finite group. The undirected power graph $P(G)$ is the undirected graph with vertex set $G$, where two vertices $a, b \in G$ are adjacent if and only if $a \neq b$ and $a^m = b$ or $b^m = a$ for some positive integer $m$.
Likewise, the directed power graph $\overrightarrow{P}(G)$ is the directed graph with vertex set $G$, where for two vertices $u, v \in G$ there is an arc from $a$ to $b$ if and only if $a\neq b$ and $b = a^m$ for some positive integer $m$.
In \cite{2} you can see a survey of results and open questions on power graphs, also it is explained that the definition given in \cite{k} covers all undirected graphs as
well. This means that the undirected power graphs were also defined in \cite{k} for the first time (see \cite{2, c3, mir2}
for more detailed explanations). These papers used only the brief term ’power graph’, even though they covered both directed and undirected power graphs.
For a group, $G$, the digraph $\overrightarrow{P} (G)$ was considered in \cite{pour} as the main subject of study. 
 In order to measure how close the power graph is to the commuting graph, Aalipour \cite{ali} introduced the enhanced power graph which lies in between. In \cite{feng}, the metric dimension of the power graphs has been studied.
Cameron proved in \cite{c2}, if $G_1$ and $G_2$ are finite groups whose undirected power graphs
are isomorphic, then their directed power graphs are also isomorphic. Clearly, the converse is also true. 
As remarked for instance  in \cite{sen}, $P(G)$ is connected for every $G$ and $P(G)$ is complete if
and only if $G$ is a cyclic group order $1$ or prime-power.
Clearly $G\cong H$ implies $P(G)\cong P(H)$. The converse is false for finite groups in general. 
For example, if $p$ is an odd prime and $m>2$, besides the elementary abelian group $H$ of order $p^m$, there are non-abelian groups $G$ of order $p^m$ and exponent $p$, so $H$ and $G$ are non-isomorphic but have isomorphic power graphs.
On the other hand, it is shown in \cite{c1, mir} that if both $G$ and $H$ are abelian then 
$P(G)\cong P(H)$ implies $G\cong H$. 
Also in \cite{mir}, it is proved that if $G$ is one of the following finite groups: 
\begin{enumerate}
\item  A simple group,
\item  A cyclic group,
\item  A symmetric group,
\item  A dihedral group,
\item A generalized quaternion group,
\end{enumerate}

\noindent and $H$ is a finite group such that $P(G) \cong
P(H)$ then $G \cong H$.

Following \cite{m,s1}, two finite groups $G$ and $H$ are said to be conformal if and only if they have the same number of elements of each order. Such groups need not be isomorphic (see the above example of groups of exponent $p$).
The relevance of this concept to power graphs is due to the fact that, as proved by Cameron \cite{c2}, two
finite groups with isomorphic undirected power graphs are conformal. Note that the converse is not true. For example, two groups of order 16 with the same numbers of elements of each order, e.g. $C_4\times C_4$ and $C_2\times Q_8$ are SmallGroup($16,2$) and SmallGroup($16,4$) in GAP respectively \cite{gap}.  Their power graphs are not isomorphic. In fact, in the group $C_4\times C_4$, each element of order $2$ has four square roots, but in $C_2\times Q_8$, the involution in $Q_8$ has twelve square roots and the other two have none. In \cite{m}, an algorithm is described to find the number of elements of a given order in abelian groups, so if $G$ and $H$ are finite conformal abelian groups, then $G\cong H$.

In \cite{s1}, the following question was investigated:

\noindent{\bf Question:} {For which  natural numbers $n$ every two conformal groups of order $n$ are isomorphic?} 

In \cite{s1}, the set of all such numbers was denoted by $S$ and odd and square-free elements of $S$ were  characterized.

In this paper we raise another question along the same lines: 

\noindent{\bf Question:} {For which natural numbers $n$, every two groups of order $n$ with isomorphic power graphs are isomorphic?}

Let us denote the set of all such numbers by $\bar S$. Since  two
finite groups with isomorphic power graphs are conformal, it is easy to see that $S\subseteq \bar S$.

In this paper we follow the terminology and notation of \cite{harary} for graphs and \cite{rob} for groups. All groups and graphs considered here are finite. In Section $2$, we shall give an answer to the aforementioned question  according to the decomposition of  prime numbers of this number. Moreover, it will be shown that all odd elements of $\bar S$ are cube-free. In Section $3$, we shall prove that if a group is nilpotent, so are all groups having the same power graph. Moreover, we show a similar statement for groups having a normal Hall subgroup.  

\section{Orders of Groups Characterised by Their Power Graphs}

In this section, we study the set $\bar S$, often exploiting methods and results already used for $S$. 
 
In \cite{s1}, Lemma 1, it is proved that if $p$ and $q$ are prime and $q|(p-1)$, then $p^2q\in S$ if and only if $q=2$. Since $S\subseteq \bar S$, the following result is straightforward.

 \begin{proposition}
 If  $p$ is an odd prime number, then $2p^2\in \bar S$.
 \end{proposition} 

Note that $8\in S$, because the two non-abelian groups of order 8 are either the dihedral group $D_8$ or the quaternion group $Q_8$, and the number of elements of order $4$ in these groups is $2$ and $6$, respectively. There are three abelian groups of order $8$, which are pair-wise non-conformal and non-conformal to  $D_8$ or $Q_8$. Therefore $8\in S$ and $8\in \bar S$. 

The following result shows that $\bar S$ contains natural numbers with an arbitrary number of prime factors.

\begin{theorem}\label{prime}
If $n \notin \bar S$ and $(n,k)=1$, then $nk \notin \bar S$.
\end{theorem}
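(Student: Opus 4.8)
The plan is to establish $nk\notin\bar S$ by exhibiting explicit witnesses. Since $n\notin\bar S$, I fix two groups $G$ and $H$ of order $n$ with $P(G)\cong P(H)$ but $G\not\cong H$. Writing $C_k$ for the cyclic group of order $k$, I claim that $G_1:=G\times C_k$ and $H_1:=H\times C_k$, both of order $nk$, are the desired witnesses: I will show $P(G_1)\cong P(H_1)$ while $G_1\not\cong H_1$. The whole argument thus splits into two independent claims, namely that tensoring with a coprime cyclic factor preserves power-graph isomorphism and that it does not collapse the non-isomorphism.

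For the power-graph part I would pass to directed power graphs, which is legitimate by Cameron's theorem (quoted in the introduction): $P(G)\cong P(H)$ yields $\overrightarrow{P}(G)\cong\overrightarrow{P}(H)$, and conversely any isomorphism of directed power graphs induces one of the undirected power graphs simply by forgetting orientations. The key reformulation is that an isomorphism of $\overrightarrow{P}$ is precisely a bijection $\phi$ preserving the relation ``$b$ is a power of $a$'', i.e. $b\in\langle a\rangle\iff\phi(b)\in\langle\phi(a)\rangle$. Now fix such a $\phi\colon G\to H$ and set $\Phi\colon G_1\to H_1$, $\Phi(g,c)=(\phi(g),c)$. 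To check that $\Phi$ preserves the power relation I use coprimality: for $x\in G$ and $c\in C_k$ one has $\gcd(|x|,|c|)=1$ (as $|x|\mid n$ and $|c|\mid k$ with $\gcd(n,k)=1$), so $\langle(x,c)\rangle=\langle x\rangle\times\langle c\rangle$, whence $(g_2,c_2)\in\langle(g_1,c_1)\rangle$ if and only if $g_2\in\langle g_1\rangle$ and $c_2\in\langle c_1\rangle$. Thus the power relation on $G_1$ factors as the product of those on $G$ and on $C_k$; since $\phi$ preserves the first factor and the identity preserves the second, $\Phi$ preserves the product relation, giving $\overrightarrow{P}(G_1)\cong\overrightarrow{P}(H_1)$ and hence $P(G_1)\cong P(H_1)$.

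For the non-isomorphism I would recover $G$ from $G_1$ intrinsically. Because $\gcd(n,k)=1$, an element $(g,c)\in G_1$ has order $|g|\,|c|$, which is coprime to $k$ exactly when $|c|=1$; hence the set of elements of $G_1$ of order coprime to $k$ equals $G\times\{e\}\cong G$, a characteristic subgroup (element orders are preserved by automorphisms). The same holds for $H_1$ and $H$, so any isomorphism $G_1\to H_1$ would carry $G\times\{e\}$ onto $H\times\{e\}$ and restrict to an isomorphism $G\cong H$, contradicting the choice of $G,H$. Therefore $G_1\not\cong H_1$, and combining the two parts yields $nk\notin\bar S$. I expect the power-graph step to be the main obstacle: the delicate point is justifying that a single exponent $m$ can simultaneously realize independently prescribed powers in the two coprime factors, which is a Chinese-remainder argument underlying the factorization $\langle(x,c)\rangle=\langle x\rangle\times\langle c\rangle$; once this is secured, the construction of $\Phi$ and the non-isomorphism argument are routine.
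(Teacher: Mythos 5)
Your proposal is correct and takes essentially the same route as the paper: both arguments form the direct products of the two non-isomorphic witnesses of order $n$ with a coprime group of order $k$, and both rest on the Chinese-remainder/coprimality fact that $(b,y)$ is a power of $(a,x)$ if and only if $b$ is a power of $a$ and $y$ is a power of $x$, i.e.\ $\langle(x,c)\rangle=\langle x\rangle\times\langle c\rangle$, so that the power relation on the product factors. If anything, your write-up is tighter than the paper's: you justify the transfer via Cameron's theorem on directed power graphs (where the paper makes an unjustifiably strong ``without loss of generality the power maps coincide'' identification), and you actually prove $G\times C_k\not\cong H\times C_k$ by recovering the factor as the set of elements of order coprime to $k$, a step the paper merely asserts.
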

\begin{proof}
Let $G$ and $G'$ be non-isomorphic groups of order $n$ and $P(G)\cong P(G')$. Without loss of generality,
we may assume that $G$ and $G'$ have the same elements and  for each $x\in G$,  $x^r$ is the same in
both $G$ and $G'$, so  their power graphs coincide.  
Let $H$ be a group of order $k$. Note that if $(a,x)$ and $(b,y)$ are elements of $G\times H$, then 
$(b,y)$ is a power of $(a,x)$ if and only if $b$ and $y$ are powers of $a$ and $x$ respectively. Namely, if $b=a^r$ and $y=x^s$, since $(n,k)=1$ we can apply the Chinese Reminder Theorem to the system and 
$ t \equiv r (\text{ mod o(a)})$ and  $t \equiv s (\text{ mod o(x)}) $. We get $(a,x)^t = (a^t,x^t) = (a^r,x^s) = (b,y)$.
The same argument applies to $G'\times H$. Therefore, the power graphs $P(G\times H)$ and $P(G'\times H)$ coincide.
On the other hand, $P(G\times H)$ and $P(G'\times H)$ are non-isomorphic, so  $nk\not\in \bar S$.
\qed
\end{proof}

\begin{lemma}\label{lemma}
 Let $G$ be a $2$-group and  $A$ be an elementary abelian $2$-group. Two vertices $(a,x)$, $(b,y)$ of the graph
$P(G\times A)$ are adjacent if and only if one of the following holds:
\begin{enumerate}
\item $x=y=1$ and $b$ is a power of $a$; 
\item  $x=y\neq 1$ and $b$ is an odd power of $a$;
\item  $x\neq 1$,  $y=1$ and $b$ is an even power of $a$;
\item  $x=1$,  $y\neq 1$ and $a$ is an even power of $b$.
\end{enumerate}
\end{lemma}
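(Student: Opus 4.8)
The plan is to unwind the definition of adjacency in $P(G\times A)$ and to exploit the arithmetic of the second coordinate. The observation I would record first is that, since $A$ has exponent $2$, for every $x\in A$ and every positive integer $m$ one has $x^m=x$ if $m$ is odd and $x^m=1$ if $m$ is even. Hence $(a,x)^m=(a^m,x^m)$, and the second coordinate of a power depends only on the parity of $m$. I would also note that, $G$ being a $2$-group, every non-identity $a\in G$ has order a power of $2$, so the parity of an exponent $m$ for which $b=a^m$ is determined modulo $o(a)$; thus the phrases ``even power of $a$'' and ``odd power of $a$'' are well defined, the sole coincidence being that the identity is simultaneously an even and an odd power of itself, which is harmless since the identity is a positive power of every vertex.

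With these remarks, the statement reduces to a short case analysis. By definition $(a,x)$ and $(b,y)$ are adjacent exactly when they are distinct and either $(b,y)=(a,x)^m$ or $(a,x)=(b,y)^m$ for some positive integer $m$. For the direction that the conditions imply adjacency I would simply exhibit the witnessing exponent in each of the four cases. For the converse I would treat the two possible power relations in turn. Suppose $(b,y)=(a^m,x^m)$. Then $y=x^m\in\{1,x\}$, so either $m$ is odd and $y=x$, or $m$ is even and $y=1$. Splitting further on whether $x=1$, this yields exactly alternatives (1), (2) and (3). The reverse relation $(a,x)=(b^m,y^m)$ is handled identically with the roles of the two vertices exchanged and produces (1), (2) and (4). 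Taking the union gives the four listed possibilities.

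It remains to confirm exhaustiveness, and this is where I would be most careful. The configurations of the second coordinates fall into five types: both trivial; both equal and nontrivial; exactly one trivial (two sub-types); and both nontrivial but distinct. The first four types are matched with (1), (2) and (3)/(4) above, while the last is impossible, since $y=x^m\in\{1,x\}$ can never equal a nontrivial $y\neq x$, and likewise for the reverse relation; such pairs are therefore non-adjacent, consistent with their absence from the list. The only genuinely delicate point is the interplay between the symmetry of adjacency and the orientation of the power relation: in (3) and (4) exactly one direction can succeed, namely the one whose even exponent annihilates the nontrivial second coordinate, so these two alternatives describe a single undirected edge viewed from its two endpoints, whereas in (1) and (2) either direction may be used. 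I would make this explicit so that the ``if and only if'' is read correctly as a statement about the undirected edge, which completes the argument.
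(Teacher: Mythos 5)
Your proof is correct and follows essentially the same route as the paper's: both unwind the definition of adjacency and split into cases according to the second coordinates, using the fact that in an elementary abelian $2$-group the value of $x^m$ depends only on the parity of $m$, and ruling out the configuration $x\neq y$ with both nontrivial. Your write-up is somewhat more careful than the paper's (well-definedness of ``odd/even power'' in a $2$-group, distinctness of vertices, and the orientation of the power relation in cases (3)--(4)), but the underlying argument is the same.
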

\begin{proof} Note first if $x\neq y $ , $x\neq 1$, and $y\neq 1$, then neither $(a,x)$ nor $(b,y)$ can be a power of the other one in the group $G\times  A$. Therefore, we can restrict our attention to ($1$)-($4$) for what $x$ and $y$ are concerned. Besides, ($1$) is obvious. 
If $x=y\neq 1$ and $(a,x)^h=(b,x)$, then $h$ must be odd, hence the vertices are adjacent whenever $b$ is an odd power of $a$; this proves ($2$).
In case ($3$), adjacency is possible only if $(a,x)^h=(b,1)$ for some $h$, and $h$ is necessarily even. Likewise, we get ($4$).
\qed
\end{proof}
\begin{theorem}\label{prime factor}
Let $n=2^{\alpha_0}p_1^{\alpha_1}\cdots p_r^{\alpha_r}$  ($r\geq 0$). If $\alpha_0\geq 4$ or there exists $i\neq 0$ such that $\alpha_i\geq 3$, then $n\not\in \bar S$.  
\end{theorem}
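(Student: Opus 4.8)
The plan is to deduce both cases from Theorem~\ref{prime}, which transports non-membership in $\bar S$ across coprime factors, so that it suffices to locate a single offending prime-power factor. Write $n=2^{\alpha_0}m$ with $m$ odd, and treat the two hypotheses separately.

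Suppose first that $\alpha_i\geq 3$ for some odd prime $p=p_i$. As recalled in the Introduction, for odd $p$ and exponent $\alpha_i\geq 3$ there are two non-isomorphic groups of order $p^{\alpha_i}$ and exponent $p$, namely the elementary abelian one and a non-abelian group of exponent $p$. In any group of exponent $p$ two distinct non-identity elements are adjacent in the power graph if and only if they generate the same subgroup of order $p$; hence the power graph is the identity joined to a disjoint union of $(p^{\alpha_i}-1)/(p-1)$ cliques of size $p-1$, a graph depending only on $p$ and $\alpha_i$. Thus the two groups have isomorphic power graphs and $p^{\alpha_i}\notin\bar S$. Since $\bigl(p^{\alpha_i},\,n/p^{\alpha_i}\bigr)=1$, Theorem~\ref{prime} gives $n\notin\bar S$.

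Now suppose $\alpha_0\geq 4$. By the same application of Theorem~\ref{prime} to the coprime factorization $n=2^{\alpha_0}\cdot m$, it is enough to prove $2^{\alpha_0}\notin\bar S$ for every $\alpha_0\geq 4$. I would build the required pair from a base pair of order $16$: the abelian group $C_8\times C_2$ and the modular group $M_{16}=\langle a,b\mid a^8=b^2=1,\ bab^{-1}=a^5\rangle$. First I would verify that these are conformal and that their lattices of cyclic subgroups coincide: each has two maximal cyclic subgroups of order $8$ meeting in a common cyclic subgroup of order $4$, one further cyclic subgroup of order $4$, and two involutions contained in no larger cyclic subgroup. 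Matching generators subgroup by subgroup produces an explicit bijection $\phi:C_8\times C_2\to M_{16}$ that fixes the identity and restricts on each maximal cyclic subgroup to a group isomorphism multiplying exponents by an odd unit; being odd, this unit preserves the parity of exponents, so $\phi$ preserves, for every ordered pair of elements, the relations ``is an odd power of'' and ``is an even power of''.

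To climb from order $16$ to order $2^{\alpha_0}$, I would fix an elementary abelian $2$-group $A=C_2^{\,\alpha_0-4}$ and compare $(C_8\times C_2)\times A$ with $M_{16}\times A$; both have order $2^{\alpha_0}$ and are non-isomorphic, the former being abelian and the latter not. Here Lemma~\ref{lemma} is decisive, since it expresses adjacency in $P(H\times A)$, for a $2$-group $H$, entirely through the $A$-coordinates together with the odd- and even-power relations of $H$. As $\phi$ preserves exactly these relations and fixes the identity, $\phi\times\mathrm{id}_A$ is a power-graph isomorphism $P\bigl((C_8\times C_2)\times A\bigr)\to P\bigl(M_{16}\times A\bigr)$, whence $2^{\alpha_0}\notin\bar S$. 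The main obstacle is precisely the order-$16$ analysis: one must produce a bijection respecting not merely adjacency but the finer odd/even-power data demanded by Lemma~\ref{lemma}, and check its consistency on the overlap of the two order-$8$ cyclic subgroups, where a single element must appear simultaneously as an even power of each of their generators. By contrast, the odd-prime case and the two invocations of Theorem~\ref{prime} are routine.
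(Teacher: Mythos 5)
Your proposal is correct, and its overall architecture is the same as the paper's: reduce to a single offending prime power via Theorem~\ref{prime}, handle odd $p^{\alpha_i}$ ($\alpha_i\geq 3$) with the two exponent-$p$ groups, and handle $2^{\alpha_0}$ ($\alpha_0\geq 4$) by exhibiting a bad pair of order $16$ and climbing with Lemma~\ref{lemma}. The one genuine difference is the order-$16$ pair: the paper uses $C_4\times C_2\times C_2$ and the central product of $D_8$ with $C_4$ (GAP IDs $10$ and $13$), whose power graphs are four triangles glued at the unique involution with square roots, whereas you use $C_8\times C_2$ and the modular group $M_{16}$ (IDs $5$ and $6$). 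Your pair does work, and the consistency check you flag as the main obstacle goes through: define $\phi$ on the two order-$8$ maximal cyclic subgroups by $(a,1)^k\mapsto a^k$ and $(a,b)^k\mapsto (ab)^{3k}$ (the odd unit $3$ is forced, since $(a,b)^2=(a^2,1)$ must map to $a^2=(ab)^6$), and on the remaining order-$4$ subgroup by $(a^2,b)\mapsto a^2b$, the two stranded involutions going to $b$ and $a^4b$; since $\phi$ restricts to a group isomorphism on every maximal cyclic subgroup and every power relation $y=x^k$ lives inside one of them, $\phi$ preserves the $k$-power relation for every $k$, in both directions. A further point in your favour: you make explicit that Lemma~\ref{lemma} demands preservation of the directed even-power and odd-power relations, not mere power-graph isomorphism of the order-$16$ pair; the paper elides this (its pair also satisfies the stronger property, but this is never stated), so your version of the climbing step is actually the more careful one. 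The only cosmetic difference in the odd case is that you invoke Theorem~\ref{prime} where the paper inlines the identical Chinese-Remainder argument for $G_1\times G'$ versus $G_2\times G'$; the content is the same.
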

\begin{proof}
We first consider the case $\alpha_0\geq 4$. If $n=2^4$, let $G=K_2\times C_4$ be the direct product of a cyclic group of order $4$ and a Klein four-group and let $H$ be the central product of the  dihedral group of order $8$ and cyclic group of order $4$. 
The groups $G$ and $H$ have IDs $10$ and $13$ respectively among the groups of order $16$ in GAP's SmallGroup library. 
The graphs $P(G)$ and $P(H)$ are isomorphic because both of them, except for the edges incident with $1$, consists of four triangles attached by the only element of order $2$ having square roots. Therefore $n\not\in \bar S$.

Assume that $n=2^{\alpha_0}$ but $\alpha_0 > 4$, and let $A$ be an elementary abelian $2$-group of order $2^{\alpha_0 - 4}$. By Lemma \ref{lemma}, the graphs $P(G\times  A)$ and $P(H\times A)$ are isomorphic, while $G\times  A \ncong  H\times  A$ . Therefore $n\not\in \bar S$.

If $n$ is not a power of $2$, we have $n=2^{\alpha_0}k $, with  $k\neq 1$ odd. Since $2^{\alpha_0}\not\in \bar S$ by the above argument and $(2^{\alpha_0},k)=1$, by Theorem \ref{prime factor} we get $n\not\in \bar S$.  

Let us now consider the case where there exists $i\neq 0$ such that $\alpha_i\geq 3$. Let $G_1$ and $G_2$ be non-isomorphic groups of order $p_i^{\alpha_i}$ and exponent $p_i$. Without loss of generality, we may assume that $G_1$ and $G_2$ have the same elements and that for each $x\in G_1$ the $k$-power of $x$ is the same in both $G_1$ and $G_2$. Let $G'$ be a group of order $\frac{n}{p_i^{\alpha_i}}$. 
For each $(x,a)\in G_1\times G'$, the $k$-power of $(x,a)$ is the same in both $G_1\times G'$ and $G_2\times G'$, therefore $(x,a)$ and $(y,b)$ are adjacent in $P(G_1\times G')$ if and only if they are adjacent in $P(G_2\times G')$. Hence $P(G_1\times G')=P(G_2\times G')$. Since these two groups of order $n$ are not isomorphic, we conclude that 
$n\not\in \bar S$.
  \qed
\end{proof}	
	

\begin{corollary} \label{cor:cube-free}
Every odd element of $\bar S$ is cube-free.
\end{corollary}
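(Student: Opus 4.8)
The plan is to derive this corollary directly from Theorem \ref{prime factor}, which is the natural workhorse here. Let $n$ be an odd element of $\bar S$, and write its prime factorization as $n = p_1^{\alpha_1} \cdots p_r^{\alpha_r}$, where each $p_i$ is an odd prime (since $n$ is odd, the exponent $\alpha_0$ of the prime $2$ is zero). My first step is to argue by contraposition: I assume that $n$ is \emph{not} cube-free, meaning there exists some index $i$ with $\alpha_i \geq 3$, and I aim to conclude that $n \notin \bar S$, contradicting the hypothesis.

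The crux is that Theorem \ref{prime factor} already handles exactly this situation. That theorem states that if $n = 2^{\alpha_0} p_1^{\alpha_1} \cdots p_r^{\alpha_r}$ satisfies either $\alpha_0 \geq 4$ or the existence of some $i \neq 0$ with $\alpha_i \geq 3$, then $n \notin \bar S$. Since our $n$ is odd we have $\alpha_0 = 0$, so the first disjunct is irrelevant; but the failure of cube-freeness gives us precisely an index $i \neq 0$ with $\alpha_i \geq 3$, which is the second disjunct. Thus the hypotheses of Theorem \ref{prime factor} are met, and I immediately obtain $n \notin \bar S$.

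This completes the contrapositive: every $n \in \bar S$ that is odd must be cube-free. I do not expect any genuine obstacle here, since all the substantive work—constructing the two non-isomorphic groups of order $p_i^{\alpha_i}$ and exponent $p_i$ and showing their power graphs coincide after taking a suitable direct product—has already been carried out inside the proof of Theorem \ref{prime factor}. The only point worth stating explicitly is that for an odd prime $p_i$ and exponent $\alpha_i \geq 3$, non-isomorphic groups of order $p_i^{\alpha_i}$ and exponent $p_i$ genuinely exist (this is the elementary abelian group versus a non-abelian group of exponent $p$, exactly as mentioned in the introduction), which is what makes the index $i$ usable. With that observation, the corollary follows as a one-line specialization of the theorem.
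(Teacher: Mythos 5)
Your proof is correct and is exactly the argument the paper intends: the corollary is an immediate specialization of Theorem \ref{prime factor} to odd $n$ (where $\alpha_0=0$, so failure of cube-freeness yields an index $i\neq 0$ with $\alpha_i\geq 3$), which is why the paper states it without a separate proof. Your closing remark about the existence of non-isomorphic groups of order $p_i^{\alpha_i}$ and exponent $p_i$ is a fair sanity check, but it belongs to the proof of the theorem itself rather than to the corollary.
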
	

As mentioned above, we have $S\subseteq\bar S$. On the other hand, when we look closely at computer programming, we notice that many small numbers belong to both $S$ and $\bar S$ or to neither. It is then natural to ask whether this
inclusion is indeed strict. 

\begin{theorem}
The set $\bar S\setminus S$ is non-empty. Its smallest element is $72$.
\end{theorem}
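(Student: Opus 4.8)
Since $S\subseteq\bar S$, proving $\bar S\setminus S\neq\emptyset$ amounts to locating an order $n$ at which the power graph separates groups that conformality alone cannot, and proving that $72$ is the least such order splits into three claims: (a) $72\notin S$; (b) $72\in\bar S$; and (c) no $n<72$ lies in $\bar S\setminus S$. Recall throughout that, by Cameron \cite{c2}, isomorphic power graphs force conformality, so $\bar S\setminus S$ consists exactly of those $n$ for which conformality fails to determine the group while the power graph succeeds.

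\textbf{The two claims at $72$.} For (a), the plan is to exhibit a pair of non-isomorphic groups of order $72=2^3\cdot 3^2$ that are conformal, i.e. have the same number of elements of each order, identified by their GAP SmallGroup IDs \cite{gap}; their mere existence gives $72\notin S$. For (b), I would use that power-graph isomorphism implies conformality, so it suffices to run through the (finitely many) conformal classes among the $50$ groups of order $72$ and check that within each class the power graphs of non-isomorphic members are pairwise non-isomorphic. The natural separating invariants are the degree sequence of $P(G)$ together with the local attachment data of each element order: for every $d$, the $\varphi(d)$ times (number of cyclic subgroups of order $d$) elements of order $d$ attach to the rest of the graph in a prescribed way, and one compares, for instance, the number of square roots of each involution — precisely the invariant that already separated $C_4\times C_4$ from $C_2\times Q_8$ in the introduction. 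In particular the conformal pair from (a) must be shown to have non-isomorphic power graphs, which is what makes $72$ belong to $\bar S\setminus S$.

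\textbf{Minimality.} For each $n<72$ I would argue $n\notin\bar S\setminus S$ as follows. If $16\mid n$ or $p^3\mid n$ for some odd prime $p$ — that is, $n\in\{16,27,32,48,54,64\}$ — then $n\notin\bar S$ already by Theorem \ref{prime factor}, so such $n$ are excluded. For every remaining $n<72$ the plan is to verify directly that $n\in S$, i.e. that conformal groups of that order are in fact isomorphic; abelian pairs are dispatched by the fact quoted in the introduction that conformal abelian groups are isomorphic, and the non-abelian cases are a finite check supported by the analysis of $S$ in \cite{s1}. Should some remaining $n<72$ unexpectedly fail to lie in $S$, I would instead exhibit for that $n$ a pair of non-isomorphic groups with isomorphic power graphs, thereby placing $n\notin\bar S$; either way $n\notin\bar S\setminus S$, and combined with (a)–(b) this identifies $72$ as the minimum.

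\textbf{Main obstacle.} The hard part will be the exhaustive verification underlying (b) and the second half of minimality: there is no purely structural criterion at this granularity, so one must actually compute power-graph invariants for all relevant groups of order $72$ and, order by order, confirm below $72$ that every non-isomorphic conformal pair either does not exist (yielding $n\in S$) or is already barred from $\bar S$ by Theorem \ref{prime factor}. The delicate point is choosing invariants of $P(G)$ fine enough to certify that conformality has genuinely split a conformal class into distinct power-graph classes at $72$, while remaining coarse enough that the check across all fifty groups of order $72$ stays tractable.
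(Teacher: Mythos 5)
Your proposal follows essentially the same route as the paper's proof: exhibit a conformal but non-isomorphic pair of groups of order $72$ (the paper uses SmallGroup$(72,35)$ and SmallGroup$(72,40)$) to get $72\notin S$, show $72\in\bar S$ by checking that within each conformal class of order-$72$ groups the power graphs distinguish non-isomorphic members via local root-count invariants, and rule out all $n<72$ by Theorem \ref{prime factor} together with GAP-assisted verification. The only difference is one of execution rather than of approach: the paper actually carries out the hand analysis of the critical pair (showing that in SmallGroup$(72,35)$ a single involution has all eighteen elements of order $4$ as square roots, while no involution of SmallGroup$(72,40)$ has this property, and that the cubes of the order-$6$ roots behave differently), which is precisely the kind of invariant comparison your plan defers to computation.
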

\begin{proof}
With the help of GAP's SmallGroup library, we found out that all numbers less than $71$ do not belong to $\bar S\setminus S$.
Applying GAP's SmallGroup library to $72$, we come across with two conformal groups of order $72$ (whose IDs are $35$ and $40$, which we name them $G$ and $G'$ respectively). Therefore, $72 \notin S$.  

Table \ref{a} displays the number of elements of there groups for each possible order. In view of the information provided therein, both groups must have a normal $3$-Sylow subgroup, which is elementary abelian, and nine $2$-Sylow subgroups, which are dihedral. 
The generators and relators provided by GAP lead to the following presentation for $G$:

$<a,b,x,y | a^3=b^3=x^2=y^2=1, (xa)^2=(xb)^2=1, ab=ba, ay=ya, by=yb, (xy)^4=1>.$

The set $M = \{1,(xy)^2,y,xyx\}$ is a subgroup of $G$. Let us prove that each element of $M$ commutes with each element of the $3$-Sylow subgroup $P=<a,b>$. Namely $y$ commutes with $a$ and $b$ because of the relations $ay=ya$ and $by=yb$. Moreover, by using $(xa)^2=1$ and $ay=ay$, we get:
$(xyx)a=xy^{-1}x=x^{-1}yx=a(xyx).$
Likewise, $(xyx)b=b(xyx)$ from $(xb)^2=1$ and $by=yb$. Clearly, $a$ and $b$ must also commute with the remaining elements of $M$. Note that, considering $G$ as a semidirect product of $P$ by $Q$ through a homomorphism 
$f: D\rightarrow \text{Aut}(P)$, the kernel of $f$ is $M$.
The group $C=<a,b,M>$ has order $36$. The products of the $3$ elements of order $2$ in $M$ by the $8$ elements of order $3$ in $<a,b>$ give exactly $24$ elements of order $6$, hence $C$ contains all the elements of order $6$.
In view of the above remarks and of Table \ref{a}, the elements of $G$ are distributed as follows:
in $C$, besides the unity, there are all the elements of order $3$ and of order $6$, plus three elements of order $2$ (in $M$);
in $G \setminus C$ there are all the elements of order $4$ plus the remaining eighteen elements of order $2$.
The only elements whose roots are of interest for the power graph $P(G)$, are those of order $2$ and of order $3$.
Since $M$ has index $2$ in every $2$-Sylow subgroups, it is normal and the intersection of two of them contains it but cannot be larger, hence the nine $2$-Sylow subgroups pair-wise intersect in $M$. Therefore $(xy)^2$ has as roots all the elements of order $4$, while none of the remaining elements of order $2$ has roots. Each element of order $3$ has as roots the three elements of order $6$ obtained by multiplying it by the nontrivial elements of $M$.

\begin{table}[]\label{a}
\begin{center}
\begin{tabular}{|l|l|l|l|l|l|l|l|l|l|l|l|l|}
\hline
 order  & 1  & 2 & 3 & 4 & 6 & 8 & 9 & 12 & 18 & 24 & 36 & 72 \\
 \hline
 number & 1 & 21 & 8 & 18 & 24 & 0 & 0 & 0 & 0 & 0 & 0 & 0  \\ 
\hline
\end{tabular}
\caption{The number of elements of each order in SmallGroup($72,35$) and SmallGroup($72,40$)  }
\end{center}
\end{table}
 
Now, we come to the group $G'$ whose ID in SmallGroup library is $40$.
The generators and relators provided by GAP lead to the following presentation for $G'$:

$G' = <a,b,x,y,z |  a^3=b^3=x^2=y^2=z^2=1, (xy)^2=z, ab=ba, xax=a^{-1}, xbx=b, yay=b, yby=a, zaz=a^{-1}, zbz=b^{-1}>.$

About the subgroups of $G'$ we have the group $P=<a,b>$ which is the unique $3$-Sylow subgroup of  $G'$,  $D=<x,y>$ is one of the $2$-Sylow subgroups of $G'$ and dihedral group. As in the case of $G$, this group is a semidirect product of $P$ by $D$ through a homomorphism $f': D \rightarrow \text{Aut}(P)$. This time, $f'$ is injective. Because the three automorphisms induced by $x, y$ and $z$ generate a group of order $8$. 
There are many ways to see that the power graph of $G'$ cannot be isomorphic with that of $G$. For instance, unlike what happens in $G$, there are no elements of order $2$ of $G'$ whose roots are all the elements of order $4$.
There are no elements of order $2$ of $G'$ with $8$ roots of order $3$. 
Each element of order $3$ has three roots of order $6$, but unlike what happens with $G$, the cubes of such root depend on the element.
For example, for $a$ we get $a^{-1}yxy$, $ab^{-1}yxy$, $a^{-1}b^{-1}yxy$, whose cubes are equal to $yxy$; for $b$  we get $b^{-1}x$, $ab^{-1}x$, $a^{-1}b^{-1}x$, whose cubes are equal to $x$.
Therefore, $P(G')$ cannot be isomorphic with $P(G)$, so $72\in\bar S $. \qed
\end{proof}

\begin{conjecture}
The set $\bar S \setminus S$ is infinite.
\end{conjecture}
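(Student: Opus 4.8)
The natural plan is to \emph{lift} the element $72\in\bar S\setminus S$ produced above by multiplying it with primes coprime to it, and to study the candidate family $n_p=72\,p$ for primes $p\ge 5$. Since $72=2^3\cdot 3^2$, each $n_p=2^3\cdot 3^2\cdot p$ still has its $2$-part equal to $2^3$ and every odd prime occurring to exponent at most $2$, so it is not ruled out by Theorem~\ref{prime factor}; thus the $n_p$ are at least admissible candidates for $\bar S$. The claim then splits into two tasks: showing $n_p\notin S$, and showing $n_p\in\bar S$ for infinitely many $p$.

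I would dispatch the first task immediately, reusing the coprime-product device of Theorem~\ref{prime}. Let $G$ and $G'$ be the two conformal, non-isomorphic groups of order $72$ constructed in the previous proof. Because $(72,p)=1$, an element $(g,c)$ of $G\times C_p$ has order $o(g)\,o(c)$, so the number of elements of each order in $G\times C_p$ depends only on the conformal type of $G$; hence $G\times C_p$ and $G'\times C_p$ are conformal. They are not isomorphic: in each product the normal Sylow $p$-subgroup is the unique copy of $C_p$, and quotienting by it recovers $G$, respectively $G'$, so an isomorphism of the products would yield $G\cong G'$. Therefore $n_p\notin S$ for every prime $p\ge 5$.

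The second task, $n_p\in\bar S$, is the crux and the expected obstacle. For $p>72$ the number of Sylow $p$-subgroups divides $72$ and is congruent to $1$ modulo $p$, hence equals $1$, so every group of order $n_p$ is a semidirect product $C_p\rtimes Q$ with $|Q|=72$ and structure map $\varphi\colon Q\to\mathrm{Aut}(C_p)\cong C_{p-1}$, whose image is cyclic of order dividing $\gcd(72,p-1)$. One would then try to prove that the power graph reconstructs such a group. The favourable point is that the $p-1$ elements of order $p$ are exactly the non-identity elements of the normal subgroup $C_p$, and, via Cameron's theorem \cite{c2} that $P(G)$ determines $\overrightarrow{P}(G)$, the poset of cyclic subgroups is recoverable; the hope is that this pins down both the isomorphism type of the order-$72$ part (invoking $72\in\bar S$) and the conjugacy class of the action $\varphi$, through the pattern of roots acquired by the elements of order $p$. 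The difficulty is that there is no general principle ensuring $n\in\bar S$ and $(n,k)=1\Rightarrow nk\in\bar S$: the power graph of a group of order $72p$ is not the product of the power graphs of its Hall factors, and one must exclude \emph{every} non-isomorphic pair of such groups with isomorphic power graphs, in particular pairs coming from inequivalent actions $\varphi$ that could nevertheless yield indistinguishable cyclic-subgroup posets. Making this exclusion uniform in $p$ is where I expect the argument to stall, which is presumably why the statement is only conjectured.
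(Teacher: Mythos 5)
The statement you set out to prove is left in the paper as a \emph{conjecture}: the authors give no proof of it, so there is no argument of theirs to match yours against --- and your write-up, as you yourself concede, is not a proof either. What you do establish is only the easy half. Your argument that $n_p=72p\notin S$ for every prime $p\ge 5$ is correct: element orders multiply across direct factors of coprime order, so $G\times C_p$ and $G'\times C_p$ are conformal whenever $G$ and $G'$ are, and they are non-isomorphic because the Sylow $p$-subgroup of each product is normal (hence unique and characteristic), so an isomorphism of the products would induce an isomorphism of the quotients $G\cong G'$. But an element of $\bar S\setminus S$ must also lie in $\bar S$, and you never prove $n_p\in\bar S$ for even a single prime $p\ge 5$, let alone infinitely many; so no new element of $\bar S\setminus S$ has been exhibited and the conjecture remains untouched.

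The gap is structural, not a detail to be patched. Membership in $\bar S$ is a universally quantified statement over \emph{all} groups of order $n$, and every tool in the paper points the other way: Theorem \ref{prime} and Theorem \ref{prime factor} produce non-membership, and their contrapositives yield only necessary conditions for membership, never sufficient ones. Indeed Theorem \ref{prime} says precisely that the \emph{complement} of $\bar S$ is closed under coprime multiplication; nothing asserts the analogous closure for $\bar S$ itself, so knowing $72\in\bar S$ and $p\in\bar S$ gives no purchase on $72p$. To carry out your plan you would have to show, uniformly in $p$, that for any two groups $C_p\rtimes_{\varphi_1}Q_1$ and $C_p\rtimes_{\varphi_2}Q_2$ of order $72p$ (with $Q_1,Q_2$ of order $72$, possibly non-isomorphic, and the available actions depending on $\gcd(72,p-1)$, hence on $p$), isomorphic power graphs force isomorphic groups. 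Cameron's theorem gives you the directed power graph, i.e.\ essentially the poset of cyclic subgroups with multiplicities, but not the conjugation action $\varphi$, and the power graph of $C_p\rtimes_\varphi Q$ is not determined by the power graphs of $C_p$ and $Q$ separately. You correctly identify this as the point where the argument stalls; that stall is not a technical inconvenience but the entire content of the conjecture, which is why the paper states it as such rather than proving it.
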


\section{Power Graphs of Nilpotent Groups and Groups Having a Normal Hall Subgroup}

Again exploiting the necessary condition of conformality, we are going to show here some situations where a property of a group $G$ is inherited by all groups with the same power graph.

\begin{theorem} If $G$ and $H$ are conformal and $H$ is nilpotent, then also $G$ is nilpotent. 
\end{theorem}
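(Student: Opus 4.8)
The plan is to reduce nilpotency to a purely arithmetic condition on the number of elements of each prime-power order, which is exactly the data preserved by conformality. Recall that a finite group is nilpotent if and only if all of its Sylow subgroups are normal, equivalently, it is the direct product of its Sylow subgroups. I would therefore aim to detect normality of the Sylow subgroups from element-order statistics alone, so that the hypothesis on $H$ transfers to $G$ by conformality.

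The key step is a counting lemma. Fix a prime $p$ and let $p^a$ be the order of a Sylow $p$-subgroup of a finite group $K$. Call $x$ a $p$-element if its order is a power of $p$; since the order of any element divides $|K|$, this is equivalent to $x^{p^a}=1$. Every Sylow $p$-subgroup consists of $p^a$ distinct $p$-elements, so the number $N_p(K)$ of $p$-elements satisfies $N_p(K)\geq p^a$. I claim that equality holds precisely when the Sylow $p$-subgroup is normal (unique): if it is unique, it contains every $p$-element and so $N_p(K)=p^a$; conversely, if $N_p(K)=p^a$, then the set of $p$-elements has the same cardinality as a single Sylow subgroup $P$ and contains $P$, hence equals $P$, which forces every Sylow $p$-subgroup to lie in $P$ and therefore to coincide with it.

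With this lemma in hand the argument is immediate. Since $G$ and $H$ are conformal, they have the same order (summing the counts over all element orders) and, for each prime $p$, the same number $N_p$ of $p$-elements (summing the counts over all orders that are powers of $p$). In particular their Sylow $p$-subgroups have the same order $p^a$ for every $p$. Because $H$ is nilpotent, its Sylow $p$-subgroup is normal, so $N_p=N_p(H)=p^a$ by the lemma; conformality gives $N_p(G)=p^a$, and the lemma applied to $G$ shows that its Sylow $p$-subgroup is normal as well. As this holds for every prime $p$ dividing $|G|=|H|$, all Sylow subgroups of $G$ are normal, and $G$ is nilpotent.

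The only delicate point, and the part I would write most carefully, is the equality case of the counting lemma, specifically the direction that $N_p(K)=p^a$ forces uniqueness of the Sylow $p$-subgroup. The inequality $N_p(K)\geq p^a$ and the translation between ``$p$-element'' and ``solution of $x^{p^a}=1$'' are routine, and everything else follows formally once one observes that conformality preserves the prime-indexed counts $N_p$. Notably, no Frobenius-type divisibility result is needed: the proof rests only on the elementary fact that the $p$-elements are exactly the union of the Sylow $p$-subgroups.
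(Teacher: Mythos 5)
Your proposal is correct and follows essentially the same route as the paper: both detect normality of each Sylow $p$-subgroup by comparing the number of elements of $p$-power order (preserved under conformality) with the Sylow order. Your write-up simply makes explicit, as a counting lemma with its equality case, what the paper's proof uses implicitly.
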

\begin{proof}Since $G$ and $H$ are conformal, for each prime $p$ dividing the common group order, 
$G$ has the same number of
elements of $p$-power order as $H$. Since $H$ is nilpotent, the number of elements
of $p$-power order is equal to the order of the $p$-Sylow subgroups of $G$ and $H$.
Thus $G$ contains only one $p$-Sylow subgroup for each prime dividing the
group order, hence is nilpotent. \qed
\end{proof}

\begin{corollary}
If $P(G)\cong P(H)$ and $H$ is nilpotent, then also $G$ is nilpotent. 
\end{corollary}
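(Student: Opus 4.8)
The plan is to derive this corollary directly from the theorem immediately preceding it, using the conformality bridge supplied by Cameron's result. First I would recall the fact, stated in the Introduction and attributed to Cameron \cite{c2}, that any two finite groups with isomorphic undirected power graphs are conformal. Thus from the hypothesis $P(G)\cong P(H)$ one immediately obtains that $G$ and $H$ are conformal, and the second step is simply to invoke the preceding theorem: since $G$ and $H$ are conformal and $H$ is nilpotent, that theorem yields that $G$ is nilpotent. No further argument is needed, so the corollary is a one-line consequence of two results already available.

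I do not expect any genuine obstacle here, since all the substantive work has already been carried out in the preceding theorem, whose proof rests on the observation that in a nilpotent group the number of elements of $p$-power order equals the order of the (unique) Sylow $p$-subgroup. The only point worth emphasizing is that conformality --- equality of the number of elements of each order --- is precisely the invariant preserved under power-graph isomorphism, which is exactly why the hypothesis $P(G)\cong P(H)$ can be traded for conformality without any loss.

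If instead one wanted a self-contained argument that does not cite the preceding theorem explicitly, I would inline its short reasoning: for each prime $p$ dividing the common order $|G|=|H|$, conformality forces $G$ to have the same number of elements of $p$-power order as the nilpotent group $H$, namely the order of a Sylow $p$-subgroup; since in $H$ these elements already exhaust a single Sylow $p$-subgroup, the same count in $G$ shows that $G$ has a unique Sylow $p$-subgroup for every $p$. A finite group with a normal (equivalently, unique) Sylow subgroup at each prime is the direct product of these, hence nilpotent, and the conclusion follows.
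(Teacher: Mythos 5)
Your proof is correct and is exactly the argument the paper intends: invoke Cameron's result that isomorphic power graphs imply conformality, then apply the preceding theorem on conformal groups with a nilpotent partner. The paper leaves this corollary unproved precisely because it follows in this one-line fashion.
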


A subgroup of a finite group is said to be a Hall subgroup if its order and index are relatively prime.

\begin{theorem}Let $G$ and $H$ be conformal groups. If $H$ has a normal Hall subgroup of order $m$ and $G$ is solvable, then also $G$ has a normal Hall subgroup of order $m$. 
\end{theorem}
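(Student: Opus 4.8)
The plan is to characterise the normal Hall subgroup of $H$ purely in terms of element orders, and then transport this description to $G$ via conformality. Write $n = |G| = |H|$ and let $N \trianglelefteq H$ be the given normal Hall subgroup, so that $|N| = m$ and $\gcd(m, n/m) = 1$. First I would show that $N = \{x \in H : o(x) \mid m\}$. The inclusion $\subseteq$ is immediate, since every element of $N$ has order dividing $|N| = m$. For the reverse inclusion, take $x \in H$ with $o(x) \mid m$; its image $xN$ in $H/N$ then has order dividing both $m$ and $|H/N| = n/m$, so by coprimality $o(xN) = 1$ and hence $x \in N$. Thus $N$ consists of precisely the $m$ elements of $H$ whose order divides $m$.

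Next I would invoke conformality. Since $G$ and $H$ have the same number of elements of each order, summing over the divisors of $m$ shows that the set $M = \{x \in G : o(x) \mid m\}$ has exactly $|N| = m$ elements. The crucial step is to prove that $M$ is a subgroup, and it is here that solvability of $G$ enters. By Hall's existence theorem for solvable groups, $G$ has a Hall subgroup $K$ of order $m$. Every element of $K$ has order dividing $|K| = m$, so $K \subseteq M$; as $|K| = m = |M|$, we conclude $K = M$. Hence $M$ is a subgroup of $G$ of order $m$.

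It remains to note that $M$ is normal and Hall. Normality is automatic: $M$ is defined by the condition $o(x) \mid m$, which is preserved under conjugation (indeed under every automorphism, so $M$ is even characteristic). Since $|M| = m$ and $\gcd(m, n/m) = 1$, the subgroup $M$ is a normal Hall subgroup of $G$ of order $m$, as desired.

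I expect the main obstacle to be the passage from ``$M$ has $m$ elements, all of order dividing $m$'' to ``$M$ is a subgroup'': a set of elements with prescribed orders need not be closed under multiplication, and conformality controls only the cardinality of $M$, not its algebraic structure. The solvability hypothesis, through Hall's theorem, is exactly what bridges this gap, and it appears indispensable for a purely counting-based argument of this kind.
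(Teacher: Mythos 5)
Your proof is correct and follows essentially the same route as the paper: count the elements of order dividing $m$ (exactly $m$ in $H$, hence in $G$ by conformality), produce a Hall subgroup of order $m$ in $G$ via Hall's theorem for solvable groups, identify it with that element set by a cardinality comparison, and conclude normality since the set is invariant under conjugation. Your write-up merely makes explicit two points the paper leaves implicit, namely that the normal Hall subgroup of $H$ equals the set of elements of order dividing $m$ (via the quotient argument) and that the identification $K=M$ forces normality.
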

\begin{proof}Since $G$ and $H$ are conformal, the elements of $H$ whose order divides
$m$ are exactly $m$, hence the same happens for $G$. Since $G$ is solvable and $m$ is prime with $|G|/m$, by Hall's Theorem it must have a Hall subgroup of order $m$, which is normal because it contains all the elements whose order divides $m$. \qed
\end{proof}
\begin{corollary}
If $P(G) \cong P(H)$, $H$ has a normal Hall subgroup of order $m$, and $G$ is solvable, then also $G$ has a normal Hall subgroup of order $m$. 
\end{corollary}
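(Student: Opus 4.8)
The plan is to observe that this corollary is an immediate consequence of the theorem stated just above it, once one inserts the bridging fact recorded in the introduction. The only real work is to justify replacing the hypothesis $P(G)\cong P(H)$ with the weaker hypothesis of conformality that the preceding theorem actually requires, so I would begin by invoking Cameron's result from \cite{c2}, quoted in the introduction, that two finite groups with isomorphic undirected power graphs are conformal. Applying this to $G$ and $H$ yields that $G$ and $H$ are conformal.

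Having established conformality, I would simply feed $G$ and $H$ into the immediately preceding theorem. That theorem has exactly three hypotheses: that $G$ and $H$ are conformal, that $H$ possesses a normal Hall subgroup of order $m$, and that $G$ is solvable. The first now holds by the previous paragraph, while the second and third are assumed verbatim in the corollary. The conclusion of that theorem is precisely that $G$ has a normal Hall subgroup of order $m$, which is what we want, so the proof closes in one line after the reduction.

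There is essentially no obstacle here: the entire content of the argument is the passage from the graph-theoretic hypothesis to the numerical invariant (conformality), and this passage is supplied wholesale by Cameron's theorem. The only point worth a moment's care is to confirm that the hypotheses of the corollary line up exactly with those of the theorem — in particular that the solvability assumption on $G$ is carried along unchanged, since it is this assumption (not power-graph isomorphism) that lets Hall's theorem guarantee a Hall subgroup of order $m$ inside $G$. I would therefore phrase the proof as: \emph{Since $P(G)\cong P(H)$, the groups $G$ and $H$ are conformal by Cameron's theorem; the claim now follows directly from the previous theorem.}
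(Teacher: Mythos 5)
Your proof is correct and is precisely the argument the paper intends: the corollary is stated without proof because it follows immediately from the preceding theorem once Cameron's result (\cite{c2}, quoted in the introduction) converts the hypothesis $P(G)\cong P(H)$ into conformality of $G$ and $H$. You have identified exactly this reduction and carried the remaining hypotheses (normal Hall subgroup of $H$, solvability of $G$) over verbatim, so nothing is missing.
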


\section*{Conclusion}
There is not a one to one function between groups and power graphs.  Therefore, the power graphs do not always determine the groups. 
 An interesting study would be to find out 
for which groups $G$ and $H$,  $P(G)\cong P(H)$ implies $G\cong H$.
The  present paper aims  to  classify power graphs based on group orders, which can be a new look at the power graphs classification. 
Moreover, the concept of conformal groups and the order of the elements of a group play an important role in the results of this paper and guide us to classify power graphs of nilpotent groups and groups which have a normal Hall subgroup.
 The authors believe that it is possible to classify power graphs based on the order of their groups. This topic can continue and leads many  open questions motivated by classification problems for future work.

\end{document}